\documentclass[reqno,12pt]{amsart}

\title
[Self-expanders of the mean curvature flow]
{Self-expanders of the mean curvature flow}
\author[Knut Smoczyk]{\sc Knut Smoczyk}
\hyphenation{Ma-the-ma-tik}
\hyphenation{Welfen-gar-ten}
\hyphenation{Hannover}
\address{
Institute of Differential Geometry\newline
Leibniz University Hannover, Germany}
\email{smoczyk@math.uni-hannover.de}
\subjclass[2010]{Primary 53C44, 53C21, 53C42}
\keywords{%
	mean curvature flow, self-expander}
\thanks{The author was supported by the German Research Foundation within the priority program SPP 2026 - Geometry at Infinity, DFG SM 78/7-1. }
\date{\today} 


\usepackage{amscd,amsfonts,mathrsfs,amsthm,enumerate}
\usepackage{amssymb, amsmath}          
\usepackage{stmaryrd}                          
\usepackage{epsfig}
\usepackage[shortalphabetic]{amsrefs}    
\parindent = 0 mm
\hfuzz     = 6 pt
\parskip   = 3 mm


\def\real     #1{{\mathbb R^{#1}}}
\def\complex  #1{{\mathbb C^{#1}}}

\def\ds       {\nabla\hspace{-1.5pt}s}

\def\equationcolor {\color{black}}
\def\textcolor     {\color{black}}

\def\bcoleq    {\begin{equation}\equationcolor}
\def\ecoleq    {\textcolor\end{equation}}

\def\bcoleqn   {\equationcolor\begin{eqnarray}}
\def\ecoleqn   {\end{eqnarray}\textcolor}

\def\herv #1 {{\rm{#1} }}


\newtheorem{theorem}{Theorem}[section]
\newtheorem{lemma}[theorem]{Lemma}
\newtheorem{corollary}[theorem]{Corollary}
\newtheorem{proposition}[theorem]{Proposition}
\newtheorem{remark}[theorem]{Remark}
\theoremstyle{definition}

\newcommand{\bfig}{\begin{figure}}
\newcommand{\efig}{\end{figure}}

\makeatletter
\def\pproof#1{\@ifnextchar[\opargproof
{\opargproof[\it Proof of #1.]}}
\def\opargproof[#1]{\par\noindent {\bf #1 }}

\makeatother

\begin{document}
\maketitle
\begin{abstract}
	We study self-expanding solutions $M^m\subset\real{n}$ of the mean curvature flow. One of our main results is, that complete mean convex self-expanding hypersurfaces are products of self-expanding curves and flat subspaces, if and only if the function $|A|^2/|H|^2$ attains a local maximum, where $A$ denotes the second fundamental form and  $H$ the mean curvature vector of $M$. If the pricipal normal $\xi=H/|H|$ is parallel in the normal bundle, then a similar result holds in higher codimension for the function $|A^\xi|^2/|H|^2$, where $A^\xi$ is the second fundamental form with respect to $\xi$. As a corollary we obtain that complete mean convex self-expanders attain strictly positive scalar curvature, if they are smoothly asymptotic to cones of non-negative scalar curvature. In particular, in dimension $2$ any mean convex self-expander that is asymptotic to a cone must be strictly convex.
\end{abstract}

\section{Introduction}\label{introduction}
A smooth immersion $F:M\to\real{n}$ of a manifold $M$ of dimension $m$ into euclidean space is called a
\textit{self-expander}
of the mean curvature flow, if it satisfies the equation
\begin{gather}
	H=\lambda F^\perp,\tag{$\ast$}\label{main 1}
\end{gather}
where $H$ is the mean curvature vector of the immersion, $\lambda$ is a positive constant and where $^\perp$ denotes the orthogonal projection onto the normal bundle of $M$.
\newline

Self-expanders arise naturally when one considers solutions of graphical mean curvature flow. In the case of codimension $1$ and under certain assumptions on the initial hypersurface at infinity, Ecker and Huisken \cite{eh} showed that the solutions of mean curvature flow of entire graphs in euclidean space exist for all times $t>0$ and become asymptotically self-expanding as $t\to\infty$. Later Stavrou \cite{stavrou} proved such a result under the weaker assumption that the initial hypersurface attains a unique tangent cone at infinity. Rasul \cite{rasul} showed that under an alternative condition at infinity and bounded gradient, the rescaled graphs converge to self-similar solutions but at a slower speed. Clutterbuck and Schn\"urer \cite{clutsch} considered graphical solutions to mean curvature flow and obtained a stability result for homothetically expanding solutions coming out of cones of positive mean curvature. It is expected that similar results hold for the mean curvature flow in higher codimension of entire graphs generated by contractions and area decreasing maps as studied in \cite{ss1}, \cite{ss2}, \cite{ss3}.

As was pointed out in \cite{eh} and \cite{stavrou}, self-expanders also arise as solutions of the mean curvature flow, if the initial submanifold is a cone. Moreover, in some situations uniqueness of self-expanders is important for the construction of mean curvature flows starting from certain singular configurations \cite{bm}. Fong and McGrath \cite{fm} proved a Liouville-type theorem for complete, mean-convex self-expanders whose ends have decaying principal curvatures. Ding \cite{ding} studied self-expanding solutions and their relationship to minimal cones. The space of asymptotically conical self-expanders was studied in several papers by Bernstein and Wu, as for example in \cite{bw1} and \cite{bw2}.

Cheng and Zhou \cite{cz} proved results for self-expanders in higher codimension related to the spectrum of the drifted Laplacian. In higher codimension self-expanders have been studied in particular for the Lagrangian mean curvature flow. In \cite{lee-wang}, \cite{jlt} and \cite{nakahara} new examples of Lagrangian self-expanders were given. Lotay and Neves \cite{ln} proved that zero-Maslov class Lagrangian self-expanders in $\complex{n}$ that are asymptotic to a pair of planes intersecting transversely are locally unique for $n>2$ and unique if $n=2$. Further uniqueness results for Lagrangian self-expanders asymptotic to the union of two transverse Lagrangian planes were shown by Imagi and Joyce \cite{ij}. 

Many of the above mentioned results show that the geometry of a self-expander is strongly determined by its asymptotic structure at infinity. This is confirmed also by the main results of this paper. We show that the \textit{pinching quantity} $|A^H|^2/|H|^4$, where $A^H$ denotes the second fundamental form with respect to the mean curvature vector $H$ is controlled by its geometry at infinity. In particular, this implies a number of uniqueness results for self-expanders with a certain asymptotic behavior. Since self-expanders are also minimal submanifolds with respect to a conformally flat Riemannian metric on $\real{n}$, the analysis of self-expanders is very similar to that of classical minimal submanifolds. Therefore one expects also Bernstein type theorems for self-expanders similar to the classical Bernstein theorems in higher codimension, for example as derived in \cite{jxy1}, \cite{jxy2}, \cite{jx}, \cite{ss0}, \cite{swx}.

\section{Structure equations for general euclidean submanifolds}\label{section 2}
Let $ F:M^m\to \real{n}$ be a smooth immersion. We denote its
\textit{pullback bundle}
by $F^{*}T\real{n}$ and the
\textit{normal bundle}
by $T^\perp M$. The induced
\textit{metric}
or 
\textit{first fundamental form}
$g=F^*\langle\cdot,\cdot\rangle$ on $TM$ is given by
$$g(v,w)=\langle dF(v),dF(w)\rangle,\quad\forall v,w\in TM,$$
where $dF\in\Gamma(F^{*}T\real{n}\otimes T^*M)$ denotes the
\textit{differential}
of $F$.
The
\textit{second fundmental form} 
$A\in\Gamma(F^{*}T\real{n}\otimes T^*M\otimes T^*M)$ is defined by
$$
A:=\nabla dF.
$$
Here and in the following all canonically induced full Levi-Civita connections on
product bundles over $M$ will be denoted by $\nabla$. We will sometimes use the connection on the normal bundle and on bundles formed from products with the normal bundle. These connections will be denoted by $\nabla^\perp$. 

By definition, $A$ is a section in the bundle $ F^{*}T\real{n}\otimes T^*M\otimes T^*M$ but it is well known that $ A$ is normal, i.e.
$$
A\in\Gamma\left(T^\perp M\otimes T^*M\otimes T^*M\right).
$$
This implies that
\begin{equation}\nonumber
	\langle A(v,w),dF(z)\rangle=0,\quad\forall v,w,z\in T_pM.
\end{equation}
The
\textit{mean curvature vector field}
$H\in\Gamma(T^\perp M)$ is the trace of the second fundamental tensor. At $p\in M$ we have
$$H=\operatorname{trace}_g(A)=\sum_{k=1}^mA(e_k,e_k),$$
where $(e_k)_{k=1,\dots,m}$ denotes an arbitrary orthonormal basis of $T_pM$ (it should be noted that many authors prefer to define $H$ as $\frac{1}{m}\operatorname{trace}_gA$, for example this is done in \cite{jost}).

For any normal vector $\xi\in T^\perp M$ we define the second fundamental
form $A^\xi$ and the scalar mean curvature $H^\xi$ with respect to $\xi$ by
$$A^\xi(v,w):=\langle A(v,w),\xi\rangle\,,\quad H^\xi:=\langle H,\xi\rangle=\operatorname{trace}_g(A^\xi).$$

The Riemannian curvature tensor on the tangent bundle will be denoted by $R$, whereas the curvature tensor of the normal bundle, considered as a 2-form with values in $\operatorname{End}(T^\perp M)$ will be written as $R^\perp$.

We summarize the equations of \textsc{Gau\ss}, \textsc{Ricci}, \textsc{Codazzi} and \textsc{Simons} in the following proposition.
\begin{proposition}\label{prop 1}
	Let $M$ be an $m$-dimensional smooth manifold and $F:M\to\real{n}$ be a smooth immersion. Then for any $p\in M$ and any $\xi\in T_p^\perp M$, $v,w,u,z\in T_pM$ we have
	\begin{enumerate}[\normalfont(a)]
		\item  \textsc{Gau\ss:}
		$$R(v,w,u,z)=\langle A(v,u),A(w,z)\rangle-\langle A(v,z),A(w,u)\rangle.$$
		\item  \textsc{Ricci:}
		$$R^\perp(v,w)\xi=\sum_{k=1}^m
		\left(A^\xi(w,e_k) A(v,e_k)-A^\xi(v,e_k)A(w,e_k)\right),$$
		where $(e_k)_{k=1,\dots,m}$ is an orthonormal basis of $T_pM$.
		\item \textsc{Codazzi:} 
		$$\nabla^\perp_vA(w,z)=\nabla^\perp_wA(v,z).$$
		\item \textsc{Simons:} 
		\begin{gather*}
			\Delta^\perp A(v,w)=(\nabla^\perp)^2_{v,w}H+\sum_{k=1}^mA^H(v,e_k)A(w,e_k)\\
			+2\sum_{k,l=1}^m\langle A(v,e_k),A(w,e_l)\rangle A(e_k,e_l)-\sum_{k,l=1}^m\langle A(v,w),A(e_k,e_l)\rangle A(e_k,e_l)\\
			-\sum_{k,l=1}^m\langle A(v,e_k),A(e_k,e_l)\rangle A(w,e_l)-\sum_{k,l=1}^m\langle A(w,e_k),A(e_k,e_l)\rangle A(v,e_l).
		\end{gather*}
	\end{enumerate}
\end{proposition}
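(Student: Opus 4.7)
The plan is to derive all four identities from the single input that $\real{n}$ is flat, so that the induced connection $\nabla$ on $F^*T\real{n}$ has vanishing curvature. The orthogonal splitting $F^*T\real{n}=dF(TM)\oplus T^\perp M$ yields the Gauss--Weingarten formulas: for tangent vector fields $X,Y$ and a normal vector field $\xi$,
\[
\nabla_X(dF(Y)) = dF(\nabla_X Y) + A(X,Y),\qquad \nabla_X\xi = -dF(S^\xi X) + \nabla^\perp_X\xi,
\]
where the shape operator $S^\xi$ is defined by $g(S^\xi X,Y) = A^\xi(X,Y)$. The first formula is just a rewriting of the definition $A=\nabla dF$, and the second follows by splitting $\nabla_X\xi$ into tangential and normal components and using $\nabla\langle dF(Y),\xi\rangle=0$ to identify the tangential component with $-dF(S^\xi X)$.

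For (a), (b), (c), I would fix $p\in M$ and extend tangent vectors and $\xi$ to local fields satisfying $\nabla v=\nabla w=\nabla u=0$ and $\nabla^\perp\xi=0$ at $p$, then iterate the Gauss--Weingarten formulas to expand $\nabla_v\nabla_w\,dF(u)$ and $\nabla_v\nabla_w\,\xi$. Antisymmetrising in $(v,w)$ and invoking flatness, $\nabla_v\nabla_w-\nabla_w\nabla_v-\nabla_{[v,w]}=0$ on $F^*T\real{n}$, produces two identities. In the first, the normal component is precisely Codazzi (c), while pairing the tangential component with $dF(z)$ and using $g(S^{A(w,u)}v,z)=\langle A(v,z),A(w,u)\rangle$ yields Gauss (a). In the second, the normal component gives Ricci (b) after expanding $S^\xi w=\sum_k A^\xi(w,e_k)\,e_k$ in an orthonormal basis, so that $A(v,S^\xi w)=\sum_k A^\xi(w,e_k)\,A(v,e_k)$.

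For Simons (d), the starting observation is that Codazzi makes $\nabla A$ totally symmetric in its three tangential slots. At $p$, fixing an orthonormal basis $(e_k)$ with $\nabla e_k|_p=0$, I would write
\[
\Delta^\perp A(v,w)=\sum_{k=1}^m\nabla^\perp_{e_k}\nabla^\perp_{e_k}A(v,w)=\sum_{k=1}^m\nabla^\perp_{e_k}\nabla^\perp_v A(e_k,w)
\]
by applying Codazzi to the inner derivative. I would then commute $\nabla^\perp_{e_k}$ and $\nabla^\perp_v$; the commutator contributes the normal-bundle curvature $R^\perp(e_k,v)$ acting on the normal factor of $A(e_k,w)$, together with Riemann curvature terms $R(e_k,v)$ acting on each of the two tangential arguments of $A$. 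Pulling $\nabla^\perp_v$ to the outside and applying Codazzi once more collapses the leading term to $(\nabla^\perp)^2_{v,w}H$, since $\sum_k\nabla^\perp_{e_k}A(e_k,w)=\nabla^\perp_w H$. Finally, I would substitute Ricci (b) for $R^\perp$, which produces the $\sum_k A^H(v,e_k)A(w,e_k)$ contribution, and Gauss (a) for the two $R$-commutators, which after collecting terms yield the three quadratic expressions in $A$ displayed on the right-hand side of (d).

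The main obstacle is the index bookkeeping in the Simons step: three separate curvature commutators must be expanded and then converted into quadratic $A$-expressions via Gauss and Ricci, and one must symmetrise carefully in $(v,w)$ so that the two Riemann contributions merge precisely into the displayed $2\sum_{k,l}$ and $-\sum_{k,l}$ terms with the correct signs and index placements. Compared to this, (a)--(c) amount to a clean tangential/normal splitting of the vanishing ambient second covariant derivative.
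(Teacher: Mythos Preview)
The paper does not prove this proposition; it merely collects these four classical identities as background (``We summarize the equations of \textsc{Gau\ss}, \textsc{Ricci}, \textsc{Codazzi} and \textsc{Simons} in the following proposition''), so there is no proof to compare against. Your outline is the standard textbook derivation and is correct in its overall structure.

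One small correction in the Simons bookkeeping: the term $\sum_k A^H(v,e_k)A(w,e_k)$ does \emph{not} come from substituting Ricci for $R^\perp$. It comes from substituting Gau\ss\ for the Riemann commutator acting on the first tangential argument, because the trace $\sum_k R(e_k,v)e_k$ contains $\sum_k\langle A(e_k,e_k),A(v,\cdot)\rangle = A^H(v,\cdot)$, which then pairs with $A(\cdot,w)$. The $R^\perp$ contribution, when expanded via Ricci, instead produces purely quadratic $\langle A,A\rangle A$ terms that combine with the two Gau\ss\ contributions to give the three displayed sums over $k,l$. Apart from this minor misattribution, your plan is sound.
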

We set
$$r:=|F|,\quad s:=\frac{r^2}{2}.$$
Decomposing $F=F^\perp+F^\top$ into its normal and tangent components, the following equations are well known (see for example \cite{smoczyk}):
\begin{equation}\label{f1}
	F^\top=\ds,\quad \nabla^\perp_v F^\perp=-A(\nabla s,v),\quad \nabla^2 s=g+\langle F,A\rangle
\end{equation}
and
\begin{equation}
	(\nabla^\perp)^2_{v,w}F^\perp=-A(v,w)-\nabla^\perp_{\ds}A(v,w)-\sum_{k=1}^mA^{F^\perp}(v,e_k)A(w,e_k).\label{f2}
\end{equation}

\section{Geometric equations for self-expanders}\label{section 3}
\begin{remark}\label{rmk 1} We give some remarks.
	\begin{enumerate}[\rm (a)]
		\item A self-expander that is minimal must be a cone. Hence, if the self-expander is everywhere smooth and complete, then the only minimal self-expanders are given by linear subspaces  $U\subset \real{n}$.
		\item Since the Riemannian product of two self-expanders with the same constant $\lambda$ is again a self-expander, one observes that self-expanders are not necessarily asymptotic to cones. For example the product of an expanding curve $\Gamma$ with a linear subspace is such a special case. These self-expanders will become important further below.
	\end{enumerate}
\end{remark}

Let us now assume that the immersion is a self-expander, i.e. 
$$H=\lambda F^\perp$$
for some positive\footnote{ Most computations will hold as well for $\lambda<0$, i.e. for self-shrinkers.} constant $\lambda$. From \eqref{f1} and \eqref{f2} we derive the following equations:
\begin{equation}\label{self 2}
	\nabla_v^\perp H=-\lambda A(\ds,v)
\end{equation}
and
\begin{equation}
	(\nabla^\perp)^2_{v,w} H=-\lambda A(v,w)
	-\lambda\nabla^\perp_{\ds}A(v,w)-\sum_{k=1}^mA^H(v,e_k)A(w,e_k).\label{self 3}
\end{equation}
Taking a trace gives
\begin{equation}\label{self 4}
	\Delta^\perp H+\lambda\nabla^\perp_{\ds}H+\sum_{k,l=1}^mA^H(e_k,e_l) A(e_k,e_l)+\lambda H=0.
\end{equation}\label{self 4b}
If we take a scalar product with $2H$ we obtain
\begin{equation}\label{self 5}
	\Delta|H|^2-2|\nabla^\perp H|^2+\lambda\langle\nabla s,\nabla|H|^2\rangle+2|A^H|^2+2\lambda|H|^2=0.
\end{equation}
In addition, combining Simons' identity with \eqref{self 3} we get
\begin{gather}
	\Delta^\perp A(v,w)
	=-\lambda A(v,w)
	-\lambda\nabla^\perp_{\ds}A(v,w)\nonumber\\
	+2\sum_{k,l=1}^m\langle A(v,e_k),A(w,e_l)\rangle A(e_k,e_l)-\sum_{k,l=1}^m\langle A(v,w),A(e_k,e_l)\rangle A(e_k,e_l)\nonumber\\
	-\sum_{k,l=1}^m\langle A(v,e_k),A(e_k,e_l)\rangle A(w,e_l)-\sum_{k,l=1}^m\langle A(w,e_k),A(e_k,e_l)\rangle A(v,e_l).\label{self 6}
\end{gather}
\section{Self-expanding curves}\label{section 4}
In the case of curves $\Gamma\subset\real{n}$ the equation for self-expanders becomes a 2nd order system of ODEs. The existence and uniqueness theorem of \textsc{Picard--Lindel\"of} implies that for any $p,w\in\real{n}$, $w\neq 0$, there exists a maximal open interval $I$ containing $0$ and a uniquely determined curve $\Gamma:I\to\real{n}$, parameterized proportional to arc-length, such that
$$\overrightarrow k=\lambda \Gamma^\perp,\quad \Gamma(0)=p,\quad \Gamma'(0)=w,$$
where $\overrightarrow k$ denotes the curvature vector of $\Gamma$. If $p,w$ are collinear, then clearly the straight lines passing through the origin in direction of $w$ are the solutions. On the other hand, if $p,w$ are linearly independent, then again by the uniqueness part in the theorem of \textsc{Picard--Lindel\"of} the solution $\Gamma$ must be a planar curve in the plane spanned by $p, w$. Since rotations around the origin map self-expanders to self-expanders, one may therefore without loss of generality consider only self-expanding curves in $\real{2}$. For these curves the equation for self-expanders becomes
\begin{equation}\label{self curve}
	k=\lambda\langle \Gamma,\xi\rangle,
\end{equation}
where $\xi$ is the unit normal along $\Gamma$ obtained by rotating $\Gamma'/\vert\Gamma'\vert$ to the left by $\pi/2$ and $k$ denotes the curvature function of $\Gamma$ defined by $\overrightarrow k=k\xi$. Solutions of \eqref{self curve} have been studied in great detail and are completely classified. Ishimura \cite{ishimura} showed that self-expanding curves are asymptotic to a cone with vertex at the origin. It is easy to show (see Theorem 3.20 in \cite{gssz} or Lemma 6.4 in \cite{halldorsson}) that the function $ke^{\frac{\lambda}{2}r^2}$ is constant along $\Gamma$, where $r:=\vert\Gamma\vert$. The following description of self-expanding curves can be found in \cite{halldorsson}:
\begin{proposition}[Halldorsson]
	All self-expanding curves $\Gamma\subset\real{2}$ are convex, properly embedded and asymptotic to the boundary of a cone with vertex at the origin. They are graphs of even functions and form a one-dimensional family parametrized by their distance $r_0$ to the origin, which can take on any value in $[0,\infty)$.
\end{proposition}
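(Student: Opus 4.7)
The plan is to leverage the conservation law $ke^{\lambda r^2/2}\equiv c$ noted just above, combined with a convexity computation for $r^2$ along arc-length and the invariance of (\ref{self curve}) under Euclidean isometries that fix the origin.

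First I would split on $c$. If $c=0$, then $k\equiv 0$, so $\Gamma$ is a straight line; the equation $\langle\Gamma,\xi\rangle=0$ then forces it through the origin, giving the $r_0=0$ case. If $c\neq 0$, I reverse orientation so that $c>0$, whence $k>0$ everywhere and $\Gamma$ is strictly convex. The bound $|k|\leq c$ from the conservation law, together with $|\Gamma'|=1$, rules out both curvature blow-up and finite-time escape to infinity, so the Picard--Lindel\"of solution extends for all arc-length $\sigma\in\mathbb{R}$. A short computation using $k=\lambda\langle\Gamma,\xi\rangle$ and $T'=k\xi$ (with $T=\Gamma'$) yields $(r^2)''=2+2k^2/\lambda\geq 2$, so $r^2$ is uniformly strictly convex and attains a unique minimum. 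After rotating and reparameterizing I place this minimum at $\sigma=0$ with $\Gamma(0)=(0,r_0)$ and $\Gamma'(0)=(1,0)$; the case $r_0=0$ is ruled out because it would force $k(0)=0$, contradicting $c>0$.

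Next I would establish the symmetry and graph properties. Since (\ref{self curve}) is invariant under the reflection $R$ across the $y$-axis, the curve $\sigma\mapsto R\Gamma(-\sigma)$ is another arc-length parametrized solution matching $\Gamma$ in both position and tangent at $\sigma=0$; Picard--Lindel\"of uniqueness then forces $\Gamma(-\sigma)=R\Gamma(\sigma)$. For the graph property I track the tangent angle $\theta$ on $[0,\infty)$: it satisfies $\theta'=k>0$ with $\theta(0)=0$, and a sign argument using (\ref{self curve}) at a putative first time $\theta=\pi/2$ gives $\xi=(-1,0)$ and $x>0$, whence $k=-\lambda x<0$, a contradiction. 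Thus $\theta\in[0,\pi/2)$, the $x$-coordinate is strictly increasing, and $\Gamma|_{[0,\infty)}$ is a graph $y=f(x)$; the reflection symmetry promotes this to a graph of an even function on all of $\mathbb{R}$.

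The asymptotics then come from combining $(r^2)''\geq 2$, which yields $r^2(\sigma)\geq r_0^2+\sigma^2$, with the conservation law to get $k(\sigma)\leq c\,e^{-\lambda(r_0^2+\sigma^2)/2}$. Hence $\int_0^\infty k\,d\sigma<\infty$, so $\theta(\sigma)\to\theta_\infty\in[0,\pi/2)$ and $x(\sigma)\to\infty$; the curve is therefore asymptotic to the ray from the origin in direction $(\cos\theta_\infty,\sin\theta_\infty)$ and, by symmetry, to its reflection, the two rays forming the boundary of a cone at the origin. Proper embeddedness follows from strict convexity (which forbids self-intersections) together with $r\to\infty$. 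The one-parameter description is then immediate: for each $r_0>0$ the initial data $((0,r_0),(1,0))$ determine a unique self-expander by Picard--Lindel\"of, while $r_0=0$ recovers the lines through the origin. I expect the subtlest step to be showing $\theta<\pi/2$ on $[0,\infty)$: a pure integral estimate on $\int k\,d\sigma$ does not by itself preclude $\theta$ from ever crossing $\pi/2$, and one really needs to exploit the sign of $\langle\Gamma,\xi\rangle$ at the critical angle to rule this out cleanly.
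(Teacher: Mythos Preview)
The paper does not give its own proof of this proposition: it is stated as Halldorsson's result, with a citation to \cite{halldorsson}, after recording the conservation law $ke^{\lambda r^2/2}=\text{const}$ (attributed to \cite{gssz} and \cite{halldorsson}). There is therefore nothing in the paper to compare your argument against.

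Your outline is nonetheless a sound self-contained reconstruction, built on precisely the two ingredients the paper highlights (the conservation law and Picard--Lindel\"of uniqueness). One small gap: your contradiction argument at a first time $\theta=\pi/2$ only shows $\theta(\sigma)<\pi/2$ for every finite $\sigma$; it does not by itself exclude $\theta_\infty=\pi/2$, which you need for the curve to be a graph over all of $\real{}$ rather than over a bounded interval. This can be closed with your own asymptotic estimate: from $\langle\Gamma,\xi\rangle=k/\lambda\to 0$ and the Gaussian tail bound $r\,|\xi-\xi_\infty|\le (r/\sigma)\cdot C e^{-\lambda\sigma^2/2}\to 0$ one gets $\langle\Gamma,\xi_\infty\rangle\to 0$; if $\theta_\infty=\pi/2$ then $\xi_\infty=(-1,0)$ and this reads $x(\sigma)\to 0$, contradicting the strict monotonicity of $x$ that you already established. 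The same computation also supplies the missing justification that the asymptotic line passes through the origin (your sketch only shows $T\to T_\infty$, which a priori gives an asymptotic line, not one through $0$).
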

\begin{figure}
	\begin{center}
		\includegraphics[width=.65\textwidth]{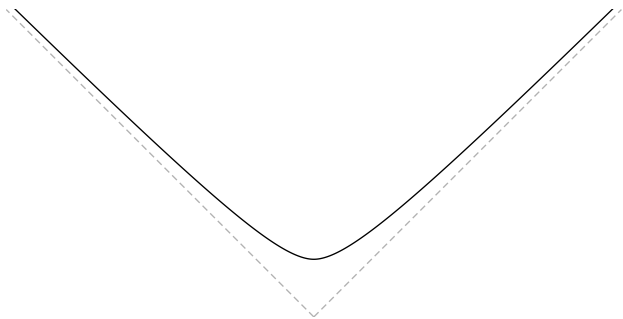}
	\end{center}
	\caption{A self-expanding curve is asymptotic to a cone.}
	\label{fig selfexpander}       
\end{figure}
Moreover, the total curvature $\int_\Gamma k$ of the curves is given by $\pi-\alpha$, where $\alpha$ denotes the opening angle of the asymptotic cone.

In the sequel, any self-expander $M=\Gamma\times\real{m-1}\subset\real{m+1}$, where $\Gamma$ is a non-trivial self-expanding curve in $\real{2}$ (i.e. not a straight line), will be called a \textit{self-expanding hyperplane} (see figure \ref{fig hyperplane}). In particular self-expanding hyperplanes are diffeomorphic to $\real{m}$.
%

\section{Mean convex self-expanding hypersurfaces}\label{section 5}
In this section we will consider complete connected self-expanding hypersurfaces. From equation \eqref{main 1} one observes that a smooth minimal self-expander must be totally geodesic, hence a linear subspace. One of our main theorems is:
\begin{theorem}\label{main theo1}
	Let $M^m\subset\real{m+1}$ be a smooth and complete connected self-expander that is different from a linear subspace. Then the set $\{H\neq 0\}$ is non-empty and the following statements are equivalent:
	\begin{enumerate}[\rm (a)]
		\item $M$ is a self-expanding hyperplane $\Gamma\times\real{m-1}$.
		\item The function $\frac{\vert A\vert^2}{\vert H\vert ^2}$ attains a local maximum on the open set $\{H\neq 0\}$.
	\end{enumerate}
	If one of these equivalent conditions is satisfied, then the set $\{H=0\}$ is empty and the function $\frac{|A|^2}{|H|^2}$ is constant to $1$.
\end{theorem}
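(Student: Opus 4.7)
My plan for (b)$\Rightarrow$(a) is a sharp Bochner calculation for $\varphi=|A|^2/|H|^2$ on the open set $\{H\neq 0\}$, combined with the strong maximum principle, real-analyticity, and a de Rham splitting in the endgame. The two preparatory statements are quick: $\{H\neq 0\}$ is non-empty because $H\equiv 0$ would make $M$ a minimal self-expander, hence a linear subspace by Remark~\ref{rmk 1}(a), contrary to hypothesis; and (a)$\Rightarrow$(b) is immediate because on $\Gamma\times\real{m-1}$ only the principal curvature $k$ of $\Gamma$ is non-zero, so $|A|^2=k^2=|H|^2$ and $\varphi\equiv 1$ is automatically a (global) maximum.

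For the core identity, I would contract \eqref{self 6} with $2A$ in codimension one, where the cubic Simons terms collapse via $2h_{ik}h_{jl}h_{kl}-2h_{ik}h_{kl}h_{jl}=0$, to obtain, for the drift Laplacian $L:=\Delta+\lambda\langle\nabla s,\nabla\,\cdot\,\rangle$,
\[
L|A|^2 \;=\; 2|\nabla A|^2 - 2\lambda|A|^2 - 2|A|^4,
\]
while \eqref{self 5} together with $|A^H|^2=|A|^2|H|^2$ gives $L|H|^2 = 2|\nabla H|^2-2\lambda|H|^2-2|A|^2|H|^2$. Substituting these into the quotient rule for $L\varphi$ and using $\nabla|A|^2=|H|^2\nabla\varphi+\varphi\nabla|H|^2$ to rewrite the cross term $\langle\nabla|A|^2,\nabla|H|^2\rangle$, the $\lambda$- and $|A|^4$-contributions cancel pairwise, leaving the clean identity
\[
\hat L\varphi \;:=\; L\varphi + \bigl\langle \nabla\log|H|^2,\nabla\varphi\bigr\rangle \;=\; \frac{2}{|H|^2}\,|U|^2, \qquad U_{ijk}:=\nabla_k h_{ij}-\frac{h_{ij}}{H}\nabla_k H.
\]

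Because $\hat L$ is an elliptic drift Laplacian with smooth coefficients on $\{H\neq 0\}$ and $\hat L\varphi\geq 0$, the strong maximum principle forces any local maximum of $\varphi$ to be locally constant. Real-analyticity of $|A|^2$ and $|H|^2$ (inherited from the analytic equation $H=\lambda F^\perp$) together with connectedness of $M$ then extend $|A|^2\equiv c|H|^2$ to all of $M$. The identity $U\equiv 0$ is exactly $\nabla(A/H)\equiv 0$, so $\widetilde A:=A/H$ is a parallel symmetric endomorphism of $TM$, its eigendistributions are parallel and integrable, and a de Rham decomposition of the universal cover splits $M$ as a metric product of umbilic factors with constant principal curvatures. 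The codimension-one restriction prevents more than one non-zero eigenvalue of $\widetilde A$, and the sign $+\lambda$ of a self-expander (with sphere factors excluded because spheres satisfy $H=-\lambda F^\perp$) forces that non-zero eigenvalue to have multiplicity one. Hence $c=1$ and $M=\Gamma\times\real{m-1}$ for a self-expanding curve $\Gamma$, and since $\Gamma$ is not a straight line (as $M$ is not a subspace) the Halldorsson proposition from Section~\ref{section 4} yields $k_\Gamma>0$ throughout, so that $\{H=0\}=\emptyset$.

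The delicate step is the Bochner algebra itself: the choice of the specific drift correction $\langle\nabla\log|H|^2,\nabla\cdot\rangle$ and of the Kato-type tensor $U=\nabla A-(A/H)\otimes\nabla H$ has to be exactly right in order to assemble what starts out as a sign-indefinite expression into a manifest sum of squares; without that pairing the residual $|A|^4$- and mixed-gradient terms leave $\hat L\varphi$ of unknown sign and no maximum principle is available. A secondary, more conceptual obstacle sits in the rigidity endgame, where the codimension-one restriction and the sign of $\lambda$ must be exploited carefully to rule out the spherical and higher-rank configurations in the de Rham splitting of $\widetilde A$.
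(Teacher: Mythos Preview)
Your argument follows essentially the same route as the paper: the drift-Laplacian identity $\hat L\varphi = 2Q^2/|H|^4$ (your $|U|^2/|H|^2$ is exactly the paper's $Q^2/|H|^4$), the strong maximum principle, exclusion of the $\nabla H\equiv 0$ case via Lawson's classification of parallel-$A$ hypersurfaces, and then a de Rham splitting combined with real analyticity. The only cosmetic difference is in the rigidity step: you phrase $Q=0$ as parallelism of $\widetilde A=A/H$ and invoke a product decomposition, whereas the paper reads off from the Codazzi-induced full symmetry of $A\otimes\nabla^\perp H$ directly that $A$ has rank one on $\{\nabla|H|\neq 0\}$; both yield the single nonzero principal curvature and hence $M=\Gamma\times\real{m-1}$.
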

\begin{figure}
	\begin{center}
		\includegraphics[width=.75\textwidth]{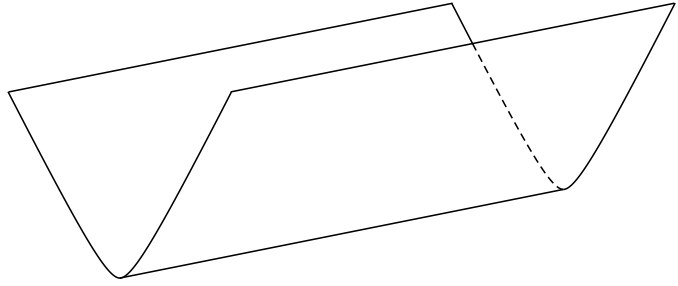}
	\end{center}
	\caption{Part of the $2$-dimensional self-expanding hyperplane $\Gamma\times\real{}\subset\real{3}$ that is asymptotic to the product of a cone $C\subset\real{2}$ and a real line.}
	\label{fig hyperplane}       
\end{figure}
\begin{proof}
	Since $M$ is a hypersurface we have $\vert A^H\vert ^2=\vert H\vert ^2\vert A\vert^2$ and \eqref{self 5} becomes
	\begin{equation}\label{self 5h}
		\Delta|H|^2-2|\nabla^\perp H|^2+\lambda\langle\nabla s,\nabla|H|^2\rangle+2(\vert A\vert ^2+\lambda)|H|^2=0.
	\end{equation}
	Moreover, on the set where $H$ is non-zero we have $|\nabla^\perp H|^2=\vert\nabla\vert H\vert\vert^2$.
	In addition, \eqref{self 6} simplifies to
	\begin{equation}\label{self 6h}
		\Delta^\perp A+\lambda\nabla^\perp_{\nabla s} A+(\vert A\vert ^2+\lambda)A=0
	\end{equation}
	from which we conclude
	\begin{equation}\label{self 6hb}
		\Delta\vert A\vert^2-2\vert\nabla^\perp A\vert^2 +\lambda\langle\nabla s,\nabla\vert A\vert^2\rangle+2(\vert A\vert ^2+\lambda)\vert A\vert^2=0.
	\end{equation}
	Finally, since
	\begin{eqnarray*}
		Q^2&:=&\vert \nabla^\perp A\otimes H-A\otimes\nabla^\perp H\vert^2\\
		&=&\vert\nabla^\perp A\vert^2|H|^2+|A|^2\vert\nabla^\perp H\vert^2-\frac{1}{2}\langle\nabla|A|^2,\nabla|H|^2\rangle
	\end{eqnarray*}
	we derive in a similar way to the computations in \cite{huisken} and \cite{mss}
	\begin{eqnarray}
		\Delta\frac{|A|^2}{|H|^2}+\left\langle\nabla\frac{|A|^2}{|H|^2},\lambda\nabla s+\frac{\nabla|H|^2}{|H|^2}\right\rangle-2\frac{Q^2}{|H|^4}=0.
	\end{eqnarray}
	If $\frac{|A|^2}{|H|^2}$ attains a local maximum, the strong elliptic maximum principle implies that $\frac{|A|^2}{|H|^2}$ is constant and that $Q^2$ vanishes. Hence
	\begin{equation}\label{q2}
		\nabla^\perp A\otimes H=A\otimes\nabla^\perp H. 
	\end{equation}
	Codazzi's equation then implies that the tensor $A\otimes\nabla^\perp H$ is fully symmetric, considered as a trilinear form an $TM$. We destinguish two cases.
	
	\textit{Case 1}. Suppose that $\nabla ^\perp H=0$ everywhere, i.e. that $|H|$ is constant. Since by assumption $M$ is not a linear subspace this constant cannot be zero by Remark \ref{rmk 1}(a). Equation \eqref{q2} then implies $\nabla^\perp A=0$. So all principle curvatures of $M$ are constant and due to a well known theorem of Lawson, it follows that $M$ is locally isometric to the product of a round sphere and an euclidean factor. Since those submanifolds are not self-expanding this is impossible and case 1 never occurs.
	
	\textit{Case 2}. Since case 1 is impossible we may therefore assume that there exists a simply connected domain $U\subset M$ where $\nabla^\perp H\neq 0$. On this set we choose an orthonormal frame 
	$$\{e_1:=\nabla|H|/|\nabla|H||,e_2,\dots,e_m\}.$$
	Then from the symmetry of $A\otimes\nabla^\perp H$ we obtain $A(e_j,e_k)=0$, for any $k\ge 1$ and $j\ge 2$. Therefore, $M$ has only one non-zero principal curvature on $U$ and $|A|^2=|H|^2$ on $U$. Let $\mathscr{D}:U\to TU$,
	$$\mathscr{D}(p):=\{v\in T_pU:A(v,\cdot)=0\},$$
	be the nullity distribution and $\mathscr{D}^\perp:=\operatorname{span}\{e_1\}$ its orthogonal complement. Exactly as in \cite{mss} we have $TU=\mathscr{D}\oplus \mathscr{D}^\perp$ and conclude that both distributions $\mathscr{D}$, $\mathscr{D}^\perp$ are parallel so that by the de Rham decomposition theorem, $U$ splits into the Riemannian product of a planar curve $\Gamma$ and an $(m-1)$-dimensional euclidean factor. Since $U$ is a self-expander, the curve $\Gamma$ must be part of a self-expanding curve. It is well known that self-expanding hypersurfaces are real analytic, therefore by completeness the local splitting implies the global splitting. This completes the proof.
\end{proof}
Recall that a hypersurface is called \textit{mean convex}, if $|H|>0$ everywhere. The next corollary shows that the pinching quantity $|A|^2/|H|^2$ on mean convex self-expanders is controlled by its asymptotic behavior at infinity.
\begin{corollary}\label{coro 0}
	Let $M\subset \real{m+1}$ be a properly immersed mean convex self-expanding hypersurface and suppose 
	$$\mu:=\lim_{r\to\infty}\sup_{M\setminus B(0,r)}\frac{|A|^2}{|H|^2}<\infty,$$
	where $B(0,r)$ denotes the closed euclidean ball of radius $r$ centered at the origin. Then one of the following cases holds:
	\begin{enumerate}[\rm (a)]
		\item $\frac{|A|^2}{|H|^2}<\mu$ on all of $M$.
		\item $M$ is a self-expanding hyperplane $\Gamma\times\real{m-1}$ and $\frac{|A|^2}{|H|^2}$ is constant to $\mu=1$.
	\end{enumerate}
	In particular, if $\mu\neq 1$, then {\rm(a)} holds.
\end{corollary}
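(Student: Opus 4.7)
The plan is to let $f := |A|^2/|H|^2$, which is a well-defined smooth function on all of $M$ since $|H|>0$ everywhere by mean convexity, and then to exploit Theorem \ref{main theo1} together with the asymptotic hypothesis. The whole argument reduces to a dichotomy on whether the supremum $\sigma := \sup_M f$ is attained at an interior point or only approached at infinity.

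The first step is to rule out the possibility $\sigma > \mu$. Suppose for contradiction that this fails, and pick $\varepsilon := (\sigma-\mu)/2 > 0$. By the definition of $\mu$ there exists $R > 0$ with $\sup_{M\setminus B(0,R)} f \leq \mu + \varepsilon < \sigma$. Since $F$ is a proper immersion, $F^{-1}(B(0,R))$ is compact, so $f$ attains its global maximum $\sigma$ at some point $p_0$ lying in this compact set. In particular $p_0$ is an interior local maximum of $f$, which by Theorem \ref{main theo1} forces $M$ to be a self-expanding hyperplane $\Gamma \times \real{m-1}$ with $f \equiv 1$. But then $\mu = \sigma = 1$, contradicting $\sigma > \mu$. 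Hence $\sigma \leq \mu$.

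Next I would split into two remaining subcases. If $\sigma < \mu$ strictly, or if $\sigma = \mu$ but the supremum is not attained on $M$, then $f < \mu$ holds pointwise and case \textrm{(a)} is established. In the remaining subcase $\sigma = \mu$ is attained at some $p_0 \in M$, so $p_0$ is an interior local maximum of $f$; Theorem \ref{main theo1} applies once more and yields case \textrm{(b)}, with $f \equiv 1$, forcing $\mu = 1$. The final assertion of the corollary is then immediate: if $\mu \neq 1$, case \textrm{(b)} cannot occur, so \textrm{(a)} must hold.

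The only step with any technical content is the properness argument in the second paragraph, where properness is used to promote a hypothetical supremum exceeding the asymptotic value into a genuine interior maximum on which the rigidity of Theorem \ref{main theo1} can bite; aside from this, the corollary is a direct corollary of that theorem, and I do not anticipate any serious obstacle.
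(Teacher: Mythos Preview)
Your proposal is correct and follows essentially the same two-step argument as the paper: first use properness together with Theorem~\ref{main theo1} to rule out $\sup_M f>\mu$, then apply Theorem~\ref{main theo1} once more in the case where the supremum is attained. The only cosmetic point is that writing $\varepsilon=(\sigma-\mu)/2$ tacitly assumes $\sigma<\infty$, but this is immediate from properness and $\mu<\infty$, so there is no genuine gap.
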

Note, that $\mu=1$ does not exclude case (a).
\begin{proof}
	Since $M$ is mean convex, the function $f:=\frac{|A|^2}{|H|^2}$ is well defined on all of $M$.
	
	\textit{Step 1}. We will first prove that $f\le \mu$ on $M$. Suppose there exists a point $p\in M$ such that at $p$ we have $f(p)=\mu+\epsilon$ for some $\epsilon>0$. Choose $r>0$ such that $\sup_{M\setminus B(0,r)} f\le\mu+\epsilon/2$. Then $p\in M\cap B(0,r)$. Moreover, since $M$ by assumption is properly immersed, the set $K:=M\cap B(0,R)$ is compact. Hence the function $f$ attains a local maximum on $K$. From theorem \ref{main theo1} we conclude that $f$ is constant to $1$ and $M=\Gamma\times \real{m-1}$. Since $f(p)\neq \mu$ this gives a contradiction.
	
	\textit{Step 2}. From step 1 we know $\sup_Mf\le \mu$. Theorem \ref{main theo1} implies that either $f<\mu$ on all of $M$ or $f$ is constant to $1$ and $M$ is equal to a self-expanding hyperplane. This completes the proof.
\end{proof}
The last corollary can also be stated in the following form:

\begin{corollary}
	For any properly immersed mean convex self-expanding hypersurface $M\subset\real{m+1}$ we have 
	$$\frac{|A|^2}{|H|^2}\le\lim_{r\to\infty}\sup_{M\setminus B(0,r)}\frac{|A|^2}{|H|^2}$$
	and equality occurs at some point $p\in M$, if and only if $M=\Gamma\times\real{m-1}$ is a self-expanding hyperplane in which case $\frac{|A|^2}{|H|^2}$ is constant to $1$.
\end{corollary}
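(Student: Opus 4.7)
The plan is to deduce this corollary directly from Corollary \ref{coro 0}, since it is essentially a reformulation of the same dichotomy. Write $f := |A|^2/|H|^2$, which is well defined on all of $M$ because $M$ is mean convex, and set $\mu := \lim_{r\to\infty}\sup_{M\setminus B(0,r)} f \in [0,+\infty]$.

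First I would dispose of the degenerate case $\mu=+\infty$: the inequality $f\le \mu$ then holds trivially, equality at a point is impossible since $f$ is finite-valued, and $M$ cannot be a self-expanding hyperplane because on $\Gamma\times\real{m-1}$ one has $|A|^2=|H|^2$ identically, which forces $\mu=1$. Hence both sides of the biconditional are false in this case and the equivalence holds vacuously.

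If instead $\mu<\infty$, Corollary \ref{coro 0} supplies the dichotomy that either $f<\mu$ strictly on all of $M$, or $M=\Gamma\times\real{m-1}$ is a self-expanding hyperplane with $f\equiv 1=\mu$. In both alternatives $f\le\mu$ pointwise, which yields the claimed inequality. For the equality characterization I would argue as follows: if $f(p)=\mu$ at some $p\in M$, then the strict alternative is ruled out, so $M$ must be a self-expanding hyperplane, and in particular $\mu=1$; conversely, if $M=\Gamma\times\real{m-1}$, then $f\equiv 1=\mu$ and equality is attained at every point.

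No substantive new obstacle appears, since the analytic content has already been extracted through the maximum-principle argument behind Theorem \ref{main theo1}. The only minor bookkeeping point worth checking is that mean convexity precludes $M$ from being a linear subspace, so Theorem \ref{main theo1} genuinely applies with $\{H\neq 0\}=M$, and that the identity $|A|^2=|H|^2$ on a self-expanding hyperplane (already noted in the proof of Theorem \ref{main theo1}) makes $\mu=1$ automatic in the equality case.
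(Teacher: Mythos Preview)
Your proposal is correct and follows the paper's approach exactly: the paper simply introduces this corollary with ``The last corollary can also be stated in the following form,'' giving no separate proof, so your derivation from Corollary~\ref{coro 0} (with the extra care of disposing of the degenerate case $\mu=+\infty$) is precisely what is intended.
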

There exist some situations where the asymptotic behavior of $\frac{|A|^2}{|H|^2}$ can be easily controlled, e.g. if the self-expander is smoothly asymptotic to a cone.
\begin{corollary}\label{coro 1}
	Any properly immersed mean convex self-expanding surface $M^2\subset\real{3}$ that is smoothly asymptotic to a cone must be strictly convex.
\end{corollary}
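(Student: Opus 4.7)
The plan is to reduce strict convexity to the pointwise pinching $|A|^2<|H|^2$ and then to derive that pinching from Corollary \ref{coro 0} after verifying that the asymptotic supremum $\mu$ satisfies $\mu\le 1$ and that alternative (b) of that corollary is excluded by the smoothness of the asymptotic cone.

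First, I would record the elementary identity valid for any oriented surface $M^2\subset\real{3}$ with principal curvatures $k_1,k_2$,
\[
	|H|^2-|A|^2 \;=\; (k_1+k_2)^2-(k_1^2+k_2^2) \;=\; 2k_1k_2 \;=\; 2K,
\]
where $K$ is the Gauss curvature. Combined with mean convexity (so that, after choosing the normal appropriately, $k_1+k_2>0$), the strict inequality $|A|^2/|H|^2<1$ is equivalent to $K>0$, which together with $k_1+k_2>0$ forces $k_1,k_2>0$, i.e.\ strict convexity.

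Next I would compute the asymptotic ratio $\mu$. Let $C\subset\real{3}$ be the smooth cone that $M$ is asymptotic to and let $\gamma\subset S^2$ be its cross-section. Away from the vertex, $C$ is ruled along the radial direction, so one principal curvature of $C$ vanishes identically; consequently $|A_C|^2=|H_C|^2$ at every point of $C$, and both sides are zero precisely at the locally planar directions where $\kappa_\gamma$ vanishes. Smooth asymptotic convergence $M\to C$ means that, expressing $M$ near infinity as a normal graph over $C$ of a function decaying in every derivative, the rescaled second fundamental forms of $M$ at points $p$ with $|p|=r\to\infty$ converge to those of $C$ at the corresponding direction. Taking ratios yields
\[
	\mu \;=\; \lim_{r\to\infty}\sup_{M\setminus B(0,r)}\frac{|A|^2}{|H|^2} \;\le\; 1.
\]

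Finally, I would invoke Corollary \ref{coro 0}. The alternative (b), that $M$ is a self-expanding hyperplane $\Gamma\times\real{1}$, is incompatible with the hypothesis: by Halldorsson's classification, $\Gamma$ is asymptotic to two rays meeting at the origin, so the asymptotic set of $\Gamma\times\real{1}$ is the union of two half-planes sharing a common line, which is not a smooth submanifold of $\real{3}$. Hence case (a) of Corollary \ref{coro 0} must hold, giving $|A|^2/|H|^2<\mu\le 1$ on all of $M$; combined with the reduction in the first step this yields strict convexity.

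The main obstacle is the bound $\mu\le 1$ at directions where $\gamma$ has an inflection, so that the pinching ratio on $C$ becomes a degenerate $0/0$. The resolution relies on using smooth (as opposed to merely $C^0$) asymptoticity, so that the graph-function expansion of $M$ over $C$ produces matching expansions of $A$ and $H$ with the same leading order, keeping the limiting ratio bounded by $1$ rather than indeterminate.
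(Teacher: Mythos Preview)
Your argument is correct and follows the same route as the paper: on a $2$-dimensional cone one principal curvature vanishes, so $|A|^2/|H|^2\equiv 1$ wherever defined, giving $\mu\le 1$; then Corollary~\ref{coro 0} together with the exclusion of the self-expanding hyperplane (not smoothly asymptotic to a cone) yields $|A|^2<|H|^2$, i.e.\ $K>0$ and hence strict convexity. The paper compresses this into two sentences and does not isolate the $0/0$ indeterminacy at possible inflection points of the link that you flag; your awareness of that edge case is a plus, though your resolution sketch is no more detailed than what the paper implicitly takes for granted.
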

\begin{proof}
	On any $2$-dimensional cone the function $\frac{|A|^2}{|H|^2}$ is constant to $1$, therefore we conclude 
	$$\mu=\lim_{r\to\infty}\sup_{M\setminus B(0,r)}\frac{|A|^2}{|H|^2}=1.$$
	The statement now follows from Gau\ss' equation for the scalar curvature $S=|H|^2-|A|^2$ and from Corollary \ref{coro 0} since the self-expanding hyperplane is not smoothly asymptotic to a cone.
\end{proof}
There exists an extension of Corollary \ref{coro 1} to any dimension in the following sense.
\begin{corollary}\label{coro 2}
	Any properly immersed mean convex self-expanding hypersurface $M\subset\real{m+1}$ that is smoothly asymptotic to a cone with nonnegative scalar curvature must attain strictly positive scalar curvature.
\end{corollary}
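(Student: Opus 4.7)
The plan is to reduce the statement to Corollary \ref{coro 0} by checking that the hypotheses force $\mu\le 1$ and by excluding the self-expanding hyperplane from the possible outcomes. For a hypersurface, Gauss' equation gives the scalar curvature
\[
S=|H|^2-|A|^2,
\]
so nonnegative scalar curvature is equivalent to $\tfrac{|A|^2}{|H|^2}\le 1$ at points where $H\ne 0$, and the desired conclusion $S>0$ is equivalent to the strict inequality $\tfrac{|A|^2}{|H|^2}<1$ everywhere on $M$.

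First I would observe that on any cone $C^m\subset\mathbb{R}^{m+1}$ (away from the apex) both $|A|^2$ and $|H|^2$ scale like $1/r^2$, so the pinching ratio $\tfrac{|A|^2}{|H|^2}$ descends to a scale-invariant function on the link of $C$ in $S^m$. By Gauss' equation applied on $C$, the nonnegativity of the scalar curvature of $C$ together with $H\neq 0$ (which is inherited from the mean convexity of $M$ through the smooth asymptotic convergence) gives $\tfrac{|A|^2}{|H|^2}\le 1$ at every smooth point of the link. Because $M$ is smoothly asymptotic to $C$, the second fundamental form and mean curvature of $M$ converge to those of $C$ over large annuli, and therefore
\[
\mu=\lim_{r\to\infty}\sup_{M\setminus B(0,r)}\frac{|A|^2}{|H|^2}\le 1.
\]

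Next I would invoke Corollary \ref{coro 0}. It asserts that either $\tfrac{|A|^2}{|H|^2}<\mu$ holds on all of $M$, or $M=\Gamma\times\mathbb{R}^{m-1}$ is a self-expanding hyperplane with $\mu=1$. The second alternative is ruled out exactly as in the proof of Corollary \ref{coro 1}: the asymptotic set of $\Gamma\times\mathbb{R}^{m-1}$ is the product of a pair of rays with $\mathbb{R}^{m-1}$, which is not a cone with isolated vertex at the origin, so $M$ cannot be smoothly asymptotic to a cone. Hence the first alternative holds, giving $\tfrac{|A|^2}{|H|^2}<\mu\le 1$ on all of $M$ and therefore $S=|H|^2-|A|^2>0$ everywhere, as required.

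The main obstacle is the step bounding $\mu$: I need to make precise what ``smoothly asymptotic to a cone'' means in order to guarantee $C^2$-convergence of the extrinsic geometry of $M$ to that of $C$ on large scales, so that the pinching ratio on $M$ at infinity is controlled by the corresponding ratio on the link of $C$. Once this is set up, combining the nonnegative scalar curvature of $C$ with the scale invariance of $\tfrac{|A|^2}{|H|^2}$ on cones yields $\mu\le 1$, and the rest is a direct application of the preceding corollary.
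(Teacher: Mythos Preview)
Your proposal is correct and follows essentially the same route as the paper: use Gauss' equation $S=|H|^2-|A|^2$ together with the inherited mean convexity of the cone to get $\tfrac{|A|^2}{|H|^2}\le 1$ on $C$, deduce $\mu\le 1$ from the asymptotic convergence, apply Corollary~\ref{coro 0}, and rule out the self-expanding hyperplane because it is not smoothly asymptotic to a cone. Your extra remarks on the scale invariance of the pinching ratio on cones and on the precise meaning of ``smoothly asymptotic'' are reasonable elaborations, but the core argument is the same as the paper's.
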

\begin{proof}
	Since the cone $C$ has nonnegative scalar curvature $S$, we must have $S=|H|^2-|A|^2\ge 0$ on $C$. On the other hand $M$ is mean convex and asymptotic to $C$ so that the cone must be mean convex as well. Therefore $\frac{|A|^2}{|H|^2}\le 1$ on $C$ and
	$$\mu=\lim_{r\to\infty}\sup_{M\setminus B(0,r)}\frac{|A|^2}{|H|^2}\le 1.$$
	Again from Corollary \ref{coro 0} and since the self-expanding hyperplane is not smoothly asymptotic to a cone we conclude $\frac{|A|^2}{|H|^2}< 1$ on $M$ which by Gau\ss' equation is equivalent to the statement that $M$ has strictly positive scalar curvature.
\end{proof}
The second fundamental form of a hypersurface that is mean convex and of positive scalar curvature satisfies some nice properties.
\begin{lemma}\label{lemm alg}
	Let $M\subset\real{m+1}$ be mean convex with positive scalar curvature $S$. Then the principal curvatures $\lambda_1,\dots,\lambda_m$ of $M$ satisfy $\lambda_i<|H|$ for $i=1,\dots,m$. 
\end{lemma}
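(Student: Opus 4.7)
The lemma is a pointwise algebraic statement about the eigenvalues of the Weingarten operator, so the self-expander structure and completeness hypothesis play no role; I would argue entirely at a single point of $M$. Using mean convexity I first orient $M$ by the unit normal $\nu:=H/|H|$, so that the principal curvatures $\lambda_1,\dots,\lambda_m$ are the eigenvalues of the Weingarten operator associated with $\nu$ and $\sum_i\lambda_i=\langle H,\nu\rangle=|H|>0$. In this convention the Gau\ss\ equation for the scalar curvature reads $S=|H|^2-|A|^2=2\sum_{j<k}\lambda_j\lambda_k$, which is positive by assumption. The claim $\lambda_i<|H|$ is then equivalent to $\sum_{j\ne i}\lambda_j>0$.

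I would argue by contradiction: fix $i$ and assume $\lambda_i\ge|H|$. Set $a:=|H|-\lambda_i=\sum_{j\ne i}\lambda_j\le 0$ and $b:=\sum_{j\ne i}\lambda_j^2\ge 0$; since $\lambda_i\ge|H|>0$ we also have $\lambda_i+|H|>0$. The key step is to split the scalar curvature according to whether a factor involves $\lambda_i$:
\begin{align*}
S &= 2\lambda_i\sum_{j\ne i}\lambda_j+2\!\!\sum_{\substack{j<k\\ j,k\ne i}}\!\!\lambda_j\lambda_k\\
  &= 2\lambda_i a+\Bigl(\sum_{j\ne i}\lambda_j\Bigr)^{2}-\sum_{j\ne i}\lambda_j^{2}\\
  &= 2\lambda_i a+a^{2}-b\\
  &= a(2\lambda_i+a)-b\\
  &= a(\lambda_i+|H|)-b.
\end{align*}
Since $a\le 0$, $\lambda_i+|H|>0$ and $b\ge 0$, this forces $S\le 0$, contradicting $S>0$. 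Hence $\lambda_i<|H|$ for every $i$.

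There is really no obstacle to surmount: the entire content of the argument is the factorisation $2\lambda_i a+a^{2}=a(\lambda_i+|H|)$, which converts the \emph{a priori} unsigned term $a^{2}$ into one with controlled sign and produces the contradiction. No analysis, maximum principle, or use of the self-expander PDE is needed; the result is a purely elementary consequence of the Gau\ss\ equation together with the two hypotheses $|H|>0$ and $S>0$.
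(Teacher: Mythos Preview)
Your argument is correct and follows the same overall strategy as the paper: both proofs are purely algebraic, work at a single point, assume $|H|-\lambda_i\le 0$, split $S=2\sum_{j<k}\lambda_j\lambda_k$ by separating the terms involving $\lambda_i$, and use the identity $(\sum_{j\ne i}\lambda_j)^2=\sum_{j\ne i}\lambda_j^2+2\sum_{\substack{j<k\\j,k\ne i}}\lambda_j\lambda_k$ to reach $S\le 0$.

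The difference is only in the packaging of the algebra. The paper derives two separate upper bounds, $S/2\le\sum_{\substack{k<l\\k,l\ne i}}\lambda_k\lambda_l$ and $S/2\le-\sum_{\substack{k<l\\k,l\ne i}}\lambda_k\lambda_l$, and then combines them to force $S\le 0$. Your single identity $S=a(\lambda_i+|H|)-b$ with $a\le 0$, $\lambda_i+|H|>0$, $b\ge 0$ reaches the same contradiction in one line; it is a slightly more economical route, but not a genuinely different idea.
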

\begin{proof}
	The proof is purely algebraic. The mean and scalar curvatures of $M$ are related to the principal curvatures $\lambda_1,\dots,\lambda_m$ by
	$$|H|=\sum_k\lambda_k$$
	and
	$$S=|H|^2-|A|^2=2\sum_{k<l}\lambda_k\lambda_l.$$
	Suppose there exists $i\in\{1,\dots,m\}$ with $|H|-\lambda_i\le0$. Then we can estimate
	\begin{eqnarray*}
		\frac{S}{2}&=&\lambda_i(|H|-\lambda_i)+\sum_{\substack{k<l\\k,l\neq i}}\lambda_k\lambda_l\le\sum_{\substack{k<l\\k,l\neq i}}\lambda_k\lambda_l.
	\end{eqnarray*}
	On the other hand
	\begin{eqnarray*}
		\frac{S}{2}&=&\lambda_i(|H|-\lambda_i)+\sum_{\substack{k<l\\k,l\neq i}}\lambda_k\lambda_l\\
		&=&-(|H|-\lambda_i)^2+|H|(|H|-\lambda_i)+\sum_{\substack{k<l\\k,l\neq i}}\lambda_k\lambda_l\\
		&&\le-(|H|-\lambda_i)^2+\sum_{\substack{k<l\\k,l\neq i}}\lambda_k\lambda_l\\
		&=&-\sum_{k\neq i}\lambda_k^2-2\sum_{\substack{k<l\\k,l\neq i}}\lambda_k\lambda_l+\sum_{\substack{k<l\\k,l\neq i}}\lambda_k\lambda_l\le -\sum_{\substack{k<l\\k,l\neq i}}\lambda_k\lambda_l.
	\end{eqnarray*}
	Together this implies
	$$\frac{S}{2}\le-\left|\sum_{\substack{k<l\\k,l\neq i}}\lambda_k\lambda_l\right|\le0$$
	and this is a contradiction.
\end{proof}
A hypersurface $M\subset\real{m+1}$ is called $k$-convex, if at each point $p\in M$ the sum of any $k$ of the $m$ principal curvatures $\lambda_1,\dots,\lambda_m$ is positive. Obviously, $m$-convexity is the same as mean convexity and a strictly convex hypersurface is $1$-convex.
Therefore Corollary \ref{coro 2} and Lemma \ref{lemm alg} imply
\begin{corollary}\label{coro 3}
	Any properly immersed mean convex self-expanding hypersurface $M\subset\real{m+1}$ that is smoothly asymptotic to a cone with nonnegative scalar curvature must be $(m-1)$-convex.
\end{corollary}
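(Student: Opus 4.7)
The plan is to observe that this corollary follows essentially by chaining together Corollary \ref{coro 2} and Lemma \ref{lemm alg}, with the algebraic identity that the sum of any $m-1$ of the principal curvatures is precisely $|H|$ minus the remaining one.

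First I would invoke Corollary \ref{coro 2}: since $M$ is a properly immersed mean convex self-expanding hypersurface smoothly asymptotic to a cone of nonnegative scalar curvature, it has strictly positive scalar curvature $S=|H|^2-|A|^2>0$ everywhere. In particular, the hypothesis of Lemma \ref{lemm alg} is satisfied at every point $p\in M$, so for the principal curvatures $\lambda_1,\dots,\lambda_m$ at $p$ one has
\begin{equation*}
\lambda_i<|H|,\qquad i=1,\dots,m.
\end{equation*}

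Next, I would read off $(m-1)$-convexity directly. Using the convention in Lemma \ref{lemm alg} that $|H|=\sum_{k=1}^m\lambda_k$, the sum of any $m-1$ of the principal curvatures is
\begin{equation*}
\sum_{k\neq i}\lambda_k=|H|-\lambda_i,
\end{equation*}
where $\lambda_i$ is the excluded one. By the inequality above, $|H|-\lambda_i>0$, so every such $(m-1)$-fold sum is strictly positive, which is exactly $(m-1)$-convexity.

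The only potential subtlety—and it is not really an obstacle, just a point to state cleanly—is the sign/ordering convention: the statement ``the sum of any $k$ of the $m$ principal curvatures is positive'' has to be interpreted for an arbitrary choice of $k$ indices, and one must note that the worst case is achieved by choosing the $m-1$ smallest principal curvatures, i.e.\ by leaving out the largest $\lambda_i$. The argument still works because Lemma \ref{lemm alg} gives the uniform bound $\lambda_i<|H|$ for every $i$, including the maximum. So no separate analysis of cases is required, and the corollary follows in a single line once Corollary \ref{coro 2} and Lemma \ref{lemm alg} are in place.
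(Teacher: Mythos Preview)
Your proof is correct and follows exactly the route the paper takes: the paper simply states that the corollary is an immediate consequence of Corollary~\ref{coro 2} and Lemma~\ref{lemm alg}, and your argument spells out precisely this chain, including the observation that $\sum_{k\neq i}\lambda_k=|H|-\lambda_i>0$.
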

Since a $3$-dimensional cone in $\real{4}$ has nonnegative scalar curvature, if it is convex, we conclude in particular
\begin{corollary}\label{coro 4}
	Any properly immersed mean convex self-expanding hypersurface $M\subset\real{4}$ that is smoothly asymptotic to a convex cone is $2$-convex.
\end{corollary}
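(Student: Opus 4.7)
The plan is to reduce the statement to Corollary \ref{coro 3} by verifying that a convex $3$-dimensional cone in $\real{4}$ automatically has nonnegative scalar curvature, so that the hypothesis of Corollary \ref{coro 3} applies. Once this is established, setting $m=3$ in Corollary \ref{coro 3} gives $(m-1) = 2$-convexity, which is exactly the conclusion.

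First I would observe that the asymptotic cone $C\subset\real{4}$ is a $3$-dimensional submanifold which, away from its vertex, is smooth, and that $C$ is ruled by half-lines emanating from the origin. Choosing a unit normal $\nu$ to $C$, the radial vector field $F/|F|$ is tangent to $C$ and satisfies $A^\nu(F/|F|,\cdot)=0$, because the shape operator annihilates the radial direction along a cone. Consequently, one of the three principal curvatures $\lambda_1,\lambda_2,\lambda_3$ of $C$ is identically zero, say $\lambda_1=0$, while the remaining two $\lambda_2,\lambda_3$ are the principal curvatures of the link of $C$ in the unit sphere $S^3$, appropriately rescaled.

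Next I would invoke the convexity of $C$. By convexity one can choose $\nu$ pointing outward so that all principal curvatures of $C$ are nonnegative, in particular $\lambda_2,\lambda_3\ge 0$. Plugging this into the Gau\ss\ identity
\begin{equation*}
S_C = 2\sum_{k<l}\lambda_k\lambda_l = 2\lambda_1\lambda_2+2\lambda_1\lambda_3+2\lambda_2\lambda_3 = 2\lambda_2\lambda_3\ge 0,
\end{equation*}
one immediately sees that $C$ has nonnegative scalar curvature everywhere.

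With this in hand the corollary follows: since $M\subset\real{4}$ is mean convex, properly immersed, and smoothly asymptotic to the convex cone $C$ which has $S_C\ge 0$, Corollary \ref{coro 3} applies with $m=3$ and yields that $M$ is $(m-1)=2$-convex. The only non-obvious step here is the geometric fact about the cone's curvatures; that step is essentially an observation rather than an obstacle, so I expect the main subtlety (if any) to lie in making sure the convention on the sign of $\nu$ is compatible with the mean convexity of $M$ and its asymptotic cone, so that the inequality $\lambda_i\ge 0$ on $C$ is genuinely what convexity provides.
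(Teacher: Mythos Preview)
Your proposal is correct and follows the same route as the paper: the paper simply remarks that a $3$-dimensional convex cone in $\real{4}$ has nonnegative scalar curvature and then invokes Corollary~\ref{coro 3} with $m=3$. Your argument is just a spelled-out version of that remark, supplying the elementary reason (one principal curvature vanishes along the radial direction, the remaining two are nonnegative by convexity, hence $S_C=2\lambda_2\lambda_3\ge 0$); the concern about the sign convention for $\nu$ is harmless, since convexity by definition allows a choice of normal making all principal curvatures nonnegative.
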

There exist more results that can be obtained from Theorem \ref{main theo1} and its corollaries, for example
\begin{corollary}\label{coro 5}
	Any properly immersed mean convex self-expanding surface $M\subset\real{3}$ that is smoothly asymptotic to a self-expanding hyperplane $\Gamma\times\real{}$ is a self-expanding hyperplane.
\end{corollary}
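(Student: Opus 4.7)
My plan is to apply Corollary~\ref{coro 0} to reduce the problem to a dichotomy, and then to rule out the ``strict inequality'' branch by a convex--geometric argument. First I would observe that on the self-expanding hyperplane $\Sigma:=\Gamma\times\real{}$ exactly one principal curvature is nonzero (the curvature of $\Gamma$), so $|A|^2/|H|^2\equiv 1$ on $\Sigma$. Smooth asymptotic convergence therefore forces
$$\mu:=\lim_{r\to\infty}\sup_{M\setminus B(0,r)}\frac{|A|^2}{|H|^2}=1,$$
and Corollary~\ref{coro 0} leaves only two possibilities: either $M$ is itself a self-expanding hyperplane, which is the desired conclusion, or else $|A|^2<|H|^2$ strictly everywhere on $M$. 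The entire task is to rule out this latter case.

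Assuming the strict inequality, Gau{\ss}' equation gives $S=|H|^2-|A|^2>0$ pointwise, so $M$ has everywhere strictly positive Gauss curvature and, after fixing the orientation $\nu$ so that the scalar mean curvature is positive, its second fundamental form is positive definite. At this point I would invoke the Hadamard--Stoker theorem (and its extension to the non-compact case by van Heijenoort) for complete, properly immersed, locally strictly convex hypersurfaces in $\real{3}$ to conclude that $M$ is embedded and bounds a closed convex region $\Omega\subset\real{3}$.

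To finish, I would use the asymptotic structure to collapse $\Omega$ into a cylinder. Let $e$ denote the unit direction of the $\real{}$-factor of $\Sigma$. Since $\Sigma$ contains the line $\{q\}\times\real{}$ for every $q\in\Gamma$, smooth asymptotic convergence produces sequences $p_n^{\pm}\in M\subset\Omega$ with $p_n^{\pm}/|p_n^{\pm}|\to\pm e$, so the recession cone of $\Omega$ contains both $+e$ and $-e$, hence the whole line $\real{}\cdot e$. A standard convex--geometric fact then forces $\Omega$ itself to contain a line parallel to $e$ through each of its points, so $\Omega=\Omega'\times\real{}$ for some planar convex region $\Omega'\subset e^{\perp}$, and consequently $M=\partial\Omega=\partial\Omega'\times\real{}$ is a cylinder along $e$. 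But any cylinder has a vanishing principal curvature along its rulings and hence identically zero Gauss curvature, contradicting $S>0$ and completing the proof.

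The main technical hurdle I anticipate is the appeal to Hadamard--Stoker--van Heijenoort: strict positivity of the Gauss curvature only gives local convexity, and upgrading this to an embedded hypersurface bounding a convex body requires some care in the non-compact setting. Once this convex body is produced, the remainder---the recession cone computation and the observation that cylinders are flat along their rulings---is elementary.
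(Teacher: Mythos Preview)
Your proposal follows exactly the paper's strategy: compute $\mu=1$ from the asymptotic model $\Gamma\times\real{}$, invoke Corollary~\ref{coro 0} to obtain the dichotomy, and then rule out the strictly convex branch. The only difference is that the paper dispatches this last step in a single sentence---it simply asserts that no strictly convex surface can be smoothly asymptotic to the flat product $\Gamma\times\real{}$---whereas you supply a full justification via Hadamard--Stoker--van~Heijenoort and a recession-cone argument; this is a legitimate and self-contained way to make rigorous what the paper leaves implicit.
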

\begin{proof}
	Since $M$ is smoothly asymptotic to $\Gamma\times\real{}$ we have 
	$$\mu=\lim_{r\to\infty}\sup_{M\setminus B(0,r)}\frac{|A|^2}{|H|^2}=1.$$
	Corollary \ref{coro 0} implies that $M$ is either equal to a self-expanding hyperplane or strictly convex. Since there do not exist strictly convex surfaces smoothly asymptotc to the flat product $\Gamma\times\real{}$, only the first case will be possible.
\end{proof}
However, one should note that a simple scaling argument shows that self-expanding hyperplanes are not necessarily equal, if they are asymptotic to each other. 
\section{Self-expanders in higher codimension}\label{section 6}
Now we will extend Theorem \ref{main theo1} to the case where $M^m\subset\real{n}$ is a self-expander in higher codimension. The idea is to study the same quantity as in \cite{smoczyk} for self-shrinkers. Let $A^H=\langle A,H\rangle$ be the second fundamental form with respect to the mean curvature vector $H$. Instead of considering the quotient $|A|^2/|H|^2$ as in the last chapter, we treat the scaling invariant quotient $|A^H|^2/|H|^4$ which for hypersurfaces coincides with $|A|^2/|H|^2$. As in \cite{smoczyk} we will see that his quantity has a much better behavior. In addition, in this section we will always assume that $|H|>0$ and that the
\textit{principal normal vector field}
$$\xi:=\frac{H}{|H|}$$
is parallel in the normal bundle, i.e.
\begin{equation}\label{eqcod0}
	\nabla^\perp\xi=0.
\end{equation}
This condition is redundant for hypersurfaces but turns out to be crucial in the forthcoming computations. Consequently we have
\begin{equation}\label{eqcod1}
	\nabla^\perp H=\nabla|H|\otimes \xi,\quad \Delta^\perp H=\Delta|H|\cdot\xi.
\end{equation}
The computations in \cite{smoczyk} for self-shrinkers carry over almost unchanged, in particular Lemma 3.3 in \cite{smoczyk} now becomes
\begin{lemma}
	Let $M^m\subset\real{n}$ be a self-expander with $|H| > 0$ and parallel principal normal $\xi$. Then the following equation holds.
	\begin{eqnarray}
		\Delta\frac{|A^H|^2}{|H|^4}&=&\frac{2}{|H|^4}\left|\nabla|H|\otimes\frac{A^H}{|H|}-|H|\nabla\frac{A^H}{|H|}\right|^2\nonumber\\
		&&-\lambda\left\langle\nabla s,\nabla\frac{|A^H|^2}{|H|^4}\right\rangle-\frac{2}{|H|}\left\langle\nabla |H|,\nabla\frac{|A^H|^2}{|H|^4}\right\rangle.\label{eq ah}
	\end{eqnarray}
\end{lemma}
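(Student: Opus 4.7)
The plan is to compute $\Delta\frac{|A^H|^2}{|H|^4}=\Delta\frac{|A^\xi|^2}{|H|^2}$ (the two are equal because $A^H=|H|A^\xi$ by parallelism of $\xi$) by combining an $A^\xi$-version of Simons' identity extracted from the equation labelled (self 6) with the scalar equation for $\Delta|H|^2$ read off from (self 5), and then applying the scalar quotient rule
\begin{equation*}
\Delta\frac{f}{g}=\frac{\Delta f}{g}-\frac{f\,\Delta g}{g^2}-\frac{2}{g}\Bigl\langle\nabla\frac{f}{g},\nabla g\Bigr\rangle,
\end{equation*}
with $f=|A^\xi|^2$ and $g=|H|^2$.

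The critical preparation extracts two algebraic identities from $\nabla^\perp\xi=0$. First, Ricci's identity in Proposition \ref{prop 1}(b), together with $R^\perp\xi=0$ (immediate from parallelism), forces the shape operators $A^\eta$ for every $\eta\in T^\perp M$ to commute with $A^\xi$ as symmetric endomorphisms of $TM$. Second, the projection of (self 4) onto $\xi^\perp\subset T^\perp M$ annihilates $\Delta^\perp H$, $\nabla^\perp_{\nabla s}H$, and $\lambda H$ (all collinear with $\xi$), leaving only the quartic term; its vanishing forces $\langle A^\xi,A^\eta\rangle=0$ for every $\eta\perp\xi$. These two identities are what drive every subsequent cancellation.

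Next, projecting (self 6) onto $\xi$---which by parallelism commutes with $\nabla^\perp$ and $\Delta^\perp$---yields $\Delta A^\xi=-\lambda A^\xi-\lambda\nabla_{\nabla s}A^\xi+T^\xi$, where $T^\xi(v,w)$ is the $\xi$-contraction of the four quartic Simons terms. Expanding $\langle A^\xi,T^\xi\rangle$ in a local normal frame $\{\eta_1=\xi,\eta_2,\dots\}$, each $\alpha$-summand reduces to $-|[A^\xi,A^{\eta_\alpha}]|^2-\langle A^\xi,A^{\eta_\alpha}\rangle^2$ via the trace identity $2\operatorname{tr}(MNMN)-2\operatorname{tr}(M^2N^2)=\operatorname{tr}([M,N]^2)=-|[M,N]|^2$; the two identities above kill every $\alpha\neq 1$ summand, leaving the clean collapse $\langle A^\xi,T^\xi\rangle=-|A^\xi|^4$. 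Plugging this into Bochner, $\Delta|A^\xi|^2=2\langle A^\xi,\Delta A^\xi\rangle+2|\nabla A^\xi|^2$, and substituting into the quotient rule together with the formula for $\Delta g=\Delta|H|^2$ from (self 5) (using $|\nabla^\perp H|^2=|\nabla|H||^2$), the two zeroth-order $\lambda$-terms cancel, the $-2|A^\xi|^4$ coming from $\Delta f$ cancels the $+2f^2/g$ coming from the $-2|A^H|^2$ piece of $-f\Delta g/g^2$, the drift terms combine into $-\lambda\langle\nabla s,\nabla(f/g)\rangle$, and the remaining quadratic gradient terms reassemble---after converting part of $(2/g)\langle\nabla(f/g),\nabla g\rangle=(4/|H|)\langle\nabla|H|,\nabla(f/g)\rangle$ into the perfect square---into $(2/|H|^4)\,|\nabla|H|\otimes A^\xi-|H|\nabla A^\xi|^2$.

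The main obstacle is the algebraic collapse $\langle A^\xi,T^\xi\rangle=-|A^\xi|^4$ in higher codimension: in the hypersurface case it is automatic because only one normal direction exists, but here it genuinely requires both the commuting identity from Ricci and the Hilbert--Schmidt orthogonality from the transverse components of (self 4); omitting either would leave a sign-definite remainder. Once that collapse is secured, the remainder is a careful but routine collection of terms in direct analogy with Lemma 3.3 of \cite{smoczyk}, with the sign $\lambda>0$ replacing the self-shrinker sign.
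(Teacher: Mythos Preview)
Your proposal is correct and follows exactly the route the paper intends: the paper gives no proof of this lemma, merely stating that ``the computations in \cite{smoczyk} for self-shrinkers carry over almost unchanged,'' and your argument is precisely that carry-over. The two key algebraic identities you isolate---$[A^\xi,A^\eta]=0$ from Ricci's equation and $\langle A^\xi,A^\eta\rangle=0$ for $\eta\perp\xi$ from the $\xi^\perp$-component of \eqref{self 4}---are exactly the ingredients used in \cite{smoczyk} to obtain the collapse $\langle A^\xi,T^\xi\rangle=-|A^\xi|^4$, after which the quotient-rule bookkeeping you describe matches Lemma~3.3 there with the expander sign $\lambda>0$.
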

In the sequel we will need the following operator. Let $E,F$ be two vector bundles over $M$ and suppose $C\in\Gamma(E\otimes T^*M\otimes T^*M)$ and $D\in\Gamma(F\otimes T^*M\otimes T^*M)$ are two bilinear forms with values in the vector bundles $E$ respectively $F$. For example $C$ could be the bilinear form $A^H$ (in which case $E$ is the trivial bundle) or $D$ could be the second fundamental tensor $A\in\Gamma(T^\perp M\otimes T^*M\otimes T^*M)$. Then $C\circledast D\in\Gamma(E\otimes F\otimes T^*M\otimes T^*M)$ is by definition the bilinear form given by the trace
$$(C\circledast D)(v,w):=\sum_{k=1}^mC(v,e_k)\otimes D(e_k,w),$$
where $e_1,\dots, e_m$ is an arbitrary orthonormal frame in $TM$.
\begin{theorem}\label{main theo 2}
	Let $M^m\subset\real{n}$ be a complete and connected self-expander with $H\neq 0$, bounded second fundamental form $A$ and parallel principal normal $\xi=H/|H|$. Then the following statements are equivalent:
	\begin{enumerate}[\rm (a)]
		\item $M$ is a self-expanding hyperplane $\Gamma\times\real{m-1}$.
		\item The function $\frac{\vert A^H\vert^2}{\vert H\vert ^4}$ attains a local maximum.
	\end{enumerate}
	If one of these equivalent conditions is satisfied, then  $\frac{|A^H|^2}{|H|^4}$ is constant to $1$.
\end{theorem}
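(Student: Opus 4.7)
The plan is to run the analogue of the proof of Theorem \ref{main theo1} for the scale-invariant quantity $f:=\vert A^H\vert^2/\vert H\vert^4$, which reduces to $\vert A\vert^2/\vert H\vert^2$ in codimension one. The direction (a)$\Rightarrow$(b) is immediate: on $\Gamma\times\real{m-1}\subset\real{m+1}\subset\real{n}$ the only non-vanishing principal curvature is $\vert H\vert$, so $\vert A^H\vert^2=\vert H\vert^4$ and $f\equiv 1$ is a global maximum. The content lies in the converse.

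For (b)$\Rightarrow$(a) I would first apply the strong elliptic maximum principle to the identity of the preceding Lemma, rewritten as
$$\Delta f+\Big\langle \lambda\nabla s+\tfrac{2}{\vert H\vert}\nabla\vert H\vert,\,\nabla f\Big\rangle\;=\;\frac{2\,Q^2}{\vert H\vert^4}\;\geq\;0,$$
with $Q:=\nabla\vert H\vert\otimes\tfrac{A^H}{\vert H\vert}-\vert H\vert\,\nabla\tfrac{A^H}{\vert H\vert}$. Because $\vert H\vert>0$ everywhere, the drift coefficient is smooth near any local maximum, so $f$ is locally constant and $Q\equiv 0$ on a neighbourhood; real-analyticity of self-expanders (they are minimal in a conformally flat metric on $\real{n}$) then propagates both facts to all of $M$. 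Using $A^H=\vert H\vert A^\xi$ (valid because $\nabla^\perp\xi=0$), the vanishing of $Q$ becomes $\nabla_v A^\xi=\tfrac{v(\vert H\vert)}{\vert H\vert}A^\xi$, and combined with the $\xi$-component of Codazzi it produces the symmetry $v(\vert H\vert)A^\xi(w,z)=w(\vert H\vert)A^\xi(v,z)$ for all $v,w,z\in TM$.

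On the open set $U:=\{\nabla\vert H\vert\neq 0\}$ I would test this identity with $v$ along $\nabla\vert H\vert$ and $w\perp v$ to force $A^\xi$ to have rank one there, with eigendirection $\nabla\vert H\vert$ and eigenvalue $\vert H\vert$; the first-order linear relation $\nabla A^\xi=d\log\vert H\vert\otimes A^\xi$ then propagates rank one along every curve in $M$, so (provided $U\neq\emptyset$) $A^\xi$ has rank one on all of $M$ and $f\equiv 1$. Writing $A^\xi=\vert H\vert\,e_1^\flat\otimes e_1^\flat$ with $e_1$ the unit eigenvector and substituting back yields $\nabla e_1=0$, so $\operatorname{span}(e_1)$ and its orthogonal complement $\mathscr D:=e_1^\perp$ are parallel distributions on $M$. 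The Ricci equation $R^\perp(\cdot,\cdot)\xi=0$ combined with the rank-one structure of $A^\xi$ then gives $A(v,e_1)=0$ for every $v\perp e_1$, so the mixed components of $A$ between $\operatorname{span}(e_1)$ and $\mathscr D$ vanish. Moore's splitting theorem together with completeness then produces a global extrinsic product $M=\Gamma\times N$ in orthogonal affine subspaces of $\real{n}$, where $\Gamma$ is a planar self-expanding curve; since all of $H$ lies in the $\xi$-direction, which is normal to $\Gamma$, the $(m-1)$-dimensional factor $N$ has zero mean curvature and hence is an affine $\real{m-1}$ by Remark \ref{rmk 1}(a). Therefore $M=\Gamma\times\real{m-1}\subset\real{m+1}$, as required.

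The main obstacle I expect is excluding the degenerate alternative $U=\emptyset$, i.e.\ $\vert H\vert$ constant on $M$, where the rank-one argument is unavailable and $A^\xi$ is only known to be parallel. To rule this out I would exploit the self-expander identities \eqref{f1}: $F^\perp=(\vert H\vert/\lambda)\xi$ with $\vert H\vert$ constant and $\nabla^\perp\xi=0$ gives $\nabla^\perp F^\perp=0$, and hence $A(\nabla s,\cdot)\equiv 0$. Thus $\nabla s=F^\top$ is a section of the parallel nullity distribution $\mathscr N$ of $A^\xi$; imposing $\nabla_v(\nabla s)\in\mathscr N$ in the Hessian formula $\nabla^2 s=g+(\vert H\vert/\lambda)A^\xi$ forces every non-zero eigenvalue of $A^\xi$ to equal $-\lambda/\vert H\vert$, whence $\vert H\vert^2=-\operatorname{rank}(A^\xi)\cdot\lambda<0$, a contradiction. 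The boundedness hypothesis on $A$ enters implicitly in justifying the global analytic propagation of the rank-one structure along $M$.
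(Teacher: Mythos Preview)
Your proof is correct and follows the same overall strategy as the paper: apply the strong maximum principle to equation \eqref{eq ah}, deduce that $f$ is constant and $Q\equiv 0$, extract the rank-one structure of $A^\xi$ from the resulting symmetry of $\nabla|H|\otimes A^\xi$, and then split $M$. Two differences in execution are worth noting. First, your handling of the degenerate case $\nabla|H|\equiv 0$ is considerably more elaborate than necessary: the paper simply observes that $\nabla^\perp H=0$ reduces equation \eqref{self 5} to $|A^H|^2+\lambda|H|^2=0$, forcing $|H|=0$ since $\lambda>0$; your detour through the Hessian of $s$ and the eigenvalue structure of the parallel tensor $A^\xi$ reaches the same contradiction but by a longer path. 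Second, you spell out the extrinsic splitting explicitly, using $R^\perp(\cdot,\cdot)\xi=0$ together with the rank-one form of $A^\xi$ to kill the mixed components $A(e_1,\mathscr D)$ and then invoking Moore's theorem, while the paper records the commutation $A^H\circledast A=A\circledast A^H$ as a preliminary and defers the entire splitting argument to \cite{smoczyk}; your version is more self-contained, the paper's is shorter by outsourcing. Two minor remarks: you do not need real-analyticity to propagate $f\equiv c$ and $Q\equiv 0$ to all of $M$, since the strong maximum principle already yields constancy on the connected manifold; and your account of where the boundedness of $A$ is used is vague---in the paper (via \cite{smoczyk}) it enters in extending the distributions $\mathscr E$, $\mathscr F$ smoothly across $\{\nabla|H|=0\}$, whereas your ODE argument $\nabla A^\xi=d\log|H|\otimes A^\xi$ handles that extension directly and does not obviously require it.
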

\begin{proof}
	The proof will be separated into several steps.
	\begin{enumerate}[\rm(i)]
		\item First note that 
		\begin{equation}\label{eqt 1}
			A^H\circledast A=A\circledast A^H.
		\end{equation}
		This is a consequence of Ricci's equation in Proposition \ref{prop 1}(b) and of $\nabla^\perp\xi=0$, because
		\begin{eqnarray*}
			0&=&|H|R^\perp(v,w)\xi=R^\perp(v,w)H\\
			&=&(A\circledast A^H-A^H\circledast A)(v,w).
		\end{eqnarray*}
		\item The strong elliptic maximum principle and equation \eqref{eq ah} imply that
		\begin{equation}\label{eqt 2}
			\frac{|A^H|^2}{|H|^4}=c
		\end{equation}
		for some constant $c>0$ and
		\begin{equation}\label{eqt 3}
			\nabla|H|\otimes\frac{A^H}{|H|}-|H|\nabla\frac{A^H}{|H|}=0
		\end{equation}
		From Codazzi's equation and since $\xi$ is parallel we obtain that
		$\nabla^\perp\frac{A^H}{|H|}=\nabla^\perp A^\xi$ is fully symmetric. Then as in \cite{smoczyk} we can decompose the quantity on the LHS in \eqref{eqt 3} into its symmetric and anti-symmetric parts to derive that $\nabla|H|\otimes A^H$ is fully symmetric and therefore
		\begin{equation}\label{eqt 4}
			|A^H|^2|\nabla|H||^2-(A^H\circledast A^H)(\nabla|H|,\nabla|H|)=0.
		\end{equation}
		
		\item We will destinguish two cases.\\
		
		\noindent\textit{Case 1.} Suppose that $\nabla|H|=0$ on $M$ which in view of $\nabla^\perp\xi=0$ is equivalent to $\nabla^\perp H=0$. Then $\lambda>0$ and equation \eqref{self 4b} show that $|H|=0$  which is a contradiction to our assumption  (in fact, the same equation shows that on any self-expander the function $|H|$ cannot attain local positive minima). So this case cannot occur.\\
		
		\noindent\textit{Case 2.} From the full symmetry of the tensor $\nabla|H|\otimes A^H$ that we obtained in step (ii) one derives that at a point $p\in M$ where $\nabla|H|(p)\neq 0$ any tangent vector $v\in T_pM$ orthogonal to $\nabla|H|(p)$ is a zero eigenvector of $A^H$ at $p$ and  that $\nabla|H|$ is an eigenvector of $A^H$ to the eigenvalue $|H|^2$ (since $\operatorname{trace}(A^H)=|H|^2$).  In particular, the tensor $A^H$ has only one non-zero eigenvalue and $|A^H|^2=|H|^4$ on all of $M$. Thus as in \cite{smoczyk} on the open set
		$$M^o:=\{p\in M:\nabla |H|(p)\neq 0\}$$
		we define the two distributions
		$$\mathscr{E}_pM^o:=\{v\in T_pM^o:A^H(v,\cdot)=|H|^2\langle v,\cdot\rangle\},$$
		$$\mathscr{F}_pM^o:=\{v\in T_pM^o:A^H(v,\cdot)=0\}.$$
		Taking into account Theorem 3.20 in \cite{gssz} or Lemma 6.4 in \cite{halldorsson}, we may then proceed exactly as in \cite{smoczyk} to prove that $\mathscr{E}$, $\mathscr{F}$ can be smoothly extended to parallel distributions on all of $M$ and that $M$ splits into the Riemannian product $M=\Gamma\times\real{m-1}$, where $\Gamma$ is a self-expanding curve, and that the distributions $\mathscr{E}, \mathscr{F}$ form the tangent bundles of $\Gamma$ respectively $\real{m-1}$. 
	\end{enumerate}
	This completes the proof of Theorem \ref{main theo 2}.
\end{proof}


%
%

\bibliographystyle{spbasic}
\begin{bibdiv}
	\begin{biblist}
		\bib{bm}{article}{
			author={Begley, Tom},
			author={Moore, Kim},
			title={On short time existence of Lagrangian mean curvature flow},
			journal={Math. Ann.},
			volume={367},
			date={2017},
			number={3-4},
			pages={1473--1515},
		}
		\bib{bw1}{article}{
			author={Jacob Bernstein and Lu Wang},
			title={Smooth compactness for spaces of asymptotically conical self-expanders of mean curvature flow},
			year={2018},
			eprint={1804.09076},
			journal={arXiv},
			primaryClass={math.DG}
		}
		\bib{bw2}{article}{
			title={The space of asymptotically conical self-expanders of mean curvature flow},
			author={Jacob Bernstein and Lu Wang},
			year={2017},
			eprint={1712.04366},
			journal={arXiv},
			primaryClass={math.DG}
		}
		\bib{cz}{article}{
			author={Cheng, Xu},
			author={Zhou, Detang},
			title={Spectral properties and rigidity for self-expanding solutions of
				the mean curvature flows},
			journal={Math. Ann.},
			volume={371},
			date={2018},
			number={1-2},
			pages={371--389},
		}
		\bib{clutsch}{article}{
			author={Clutterbuck, Julie},
			author={Schn\"{u}rer, Oliver C.},
			title={Stability of mean convex cones under mean curvature flow},
			journal={Math. Z.},
			volume={267},
			date={2011},
			number={3-4},
			pages={535--547},
		}
		\bib{ding}{article}{
			author={Ding, Qi},
			title={Minimal cones and self-expanding solutions for mean curvature
				flows},
			journal={Math. Ann.},
			volume={376},
			date={2020},
			number={1-2},
			pages={359--405},
		}
		\bib{eh}{article}{
			author={Ecker, Klaus},
			author={Huisken, Gerhard},
			title={Mean curvature evolution of entire graphs},
			journal={Ann. of Math. (2)},
			volume={130},
			date={1989},
			number={3},
			pages={453--471},
		}
		\bib{fm}{article}{
			author={Fong, Frederick Tsz-Ho},
			author={McGrath, Peter},
			title={Rotational symmetry of asymptotically conical mean curvature flow
				self-expanders},
			journal={Comm. Anal. Geom.},
			volume={27},
			date={2019},
			number={3},
			pages={599--618},
		}
		\bib{gssz}{article}{
			author={Groh, Konrad},
			author={Schwarz, Matthias},
			author={Smoczyk, Knut},
			author={Zehmisch, Kai},
			title={Mean curvature flow of monotone Lagrangian submanifolds},
			journal={Math. Z.},
			volume={257},
			date={2007},
			number={2},
			pages={295--327},
		}
		\bib{halldorsson}{article}{
			author={Halldorsson, Hoeskuldur P.},
			title={Self-similar solutions to the curve shortening flow},
			journal={Trans. Amer. Math. Soc.},
			volume={364},
			date={2012},
			number={10},
			pages={5285--5309},
		}
		\bib{huisken}{article}{
			author={Huisken, Gerhard},
			title={Local and global behaviour of hypersurfaces moving by mean
				curvature},
			conference={
				title={Differential geometry: partial differential equations on
					manifolds (Los Angeles, CA, 1990)},
			},
			book={
				series={Proc. Sympos. Pure Math.},
				volume={54},
				publisher={Amer. Math. Soc.},
				place={Providence, RI},
			},
			date={1993},
			pages={175--191},
		}
		\bib{ij}{article}{
			author={Imagi, Yohsuke},
			author={Joyce, Dominic},
			author={Oliveira dos Santos, Joana},
			title={Uniqueness results for special Lagrangians and Lagrangian mean
				curvature flow expanders in $\Bbb{C}^m$},
			journal={Duke Math. J.},
			volume={165},
			date={2016},
			number={5},
			pages={847--933},
		}
		\bib{ishimura}{article}{
			author={Ishimura, Naoyuki},
			title={Curvature evolution of plane curves with prescribed opening angle},
			journal={Bull. Austral. Math. Soc.},
			volume={52},
			date={1995},
			number={2},
			pages={287--296},
		}
		\bib{jost}{book}{
			author={Jost, J\"{u}rgen},
			title={Riemannian geometry and geometric analysis},
			series={Universitext},
			edition={7},
			publisher={Springer, Cham},
			date={2017},
			pages={xiv+697},
		}
		\bib{jxy2}{article}{
			author={Jost, J\"urgen},
			author={Xin, Yuanlong},
			author={Yang, Ling},
			title={A spherical Bernstein theorem for minimal submanifolds of higher
				codimension},
			journal={Calc. Var. Partial Differential Equations},
			volume={57},
			date={2018},
			number={6},
			pages={Paper No. 166, 21},
		}
		\bib{jxy1}{article}{
			author={Jost, J\"urgen},
			author={Xin, Yuanlong},
			author={Yang, Ling},
			title={The Gauss image of entire graphs of higher codimension and
				Bernstein type theorems},
			journal={Calc. Var. Partial Differential Equations},
			volume={47},
			date={2013},
			number={3-4},
			pages={711--737},
		}
		\bib{jx}{article}{
			author={Jost, J\"urgen},
			author={Xin, Yuanlong},
			title={Bernstein type theorems for higher codimension},
			journal={Calc. Var. Partial Differential Equations},
			volume={9},
			date={1999},
			number={4},
			pages={277--296},
		}
		\bib{jlt}{article}{
			author={Joyce, Dominic},
			author={Lee, Yng-Ing},
			author={Tsui, Mao-Pei},
			title={Self-similar solutions and translating solitons for Lagrangian
				mean curvature flow},
			journal={J. Differential Geom.},
			volume={84},
			date={2010},
			number={1},
			pages={127--161},
		}
		\bib{lee-wang}{article}{
			author={Lee, Yng-Ing},
			author={Wang, Mu-Tao},
			title={Hamiltonian stationary shrinkers and expanders for Lagrangian mean
				curvature flows},
			journal={J. Differential Geom.},
			volume={83},
			date={2009},
			number={1},
			pages={27--42},
		}
		\bib{ln}{article}{
			author={Lotay, Jason D.},
			author={Neves, Andr\'{e}},
			title={Uniqueness of Langrangian self-expanders},
			journal={Geom. Topol.},
			volume={17},
			date={2013},
			number={5},
			pages={2689--2729},
			issn={1465-3060},
		}
		\bib{mss}{article}{
			author={Mart\'\i n, Francisco},
			author={Savas-Halilaj, Andreas},
			author={Smoczyk, Knut},
			title={On the topology of translating solitons of the mean curvature
				flow},
			journal={Calc. Var. Partial Differential Equations},
			volume={54},
			date={2015},
			number={3},
			pages={2853--2882},
		}
		\bib{nakahara}{article}{
			author={Nakahara, Hiroshi},
			title={Some examples of self-similar solutions and translating solitons
				for Lagrangian mean curvature flow},
			journal={Tohoku Math. J. (2)},
			volume={65},
			date={2013},
			number={3},
			pages={411--425},
			issn={0040-8735},
		}
		\bib{rasul}{article}{
			author={Rasul, Kashif},
			title={Slow convergence of graphs under mean curvature flow},
			journal={Comm. Anal. Geom.},
			volume={18},
			date={2010},
			number={5},
			pages={987--1008},
		}
		\bib{ss3}{article}{
			author={Savas-Halilaj, Andreas},
			author={Smoczyk, Knut},
			title={Mean curvature flow of area decreasing maps between Riemann
				surfaces},
			journal={Ann. Global Anal. Geom.},
			volume={53},
			date={2018},
			number={1},
			pages={11--37},
		}
		\bib{ss2}{article}{
			author={Savas-Halilaj, Andreas},
			author={Smoczyk, Knut},
			title={Evolution of contractions by mean curvature flow},
			journal={Math. Ann.},
			volume={361},
			date={2015},
			number={3-4},
			pages={725--740},
		}
		\bib{ss1}{article}{
			author={Savas-Halilaj, Andreas},
			author={Smoczyk, Knut},
			title={Homotopy of area decreasing maps by mean curvature flow},
			journal={Adv. Math.},
			volume={255},
			date={2014},
			pages={455--473},
		}
		\bib{ss0}{article}{
			author={Savas-Halilaj, Andreas},
			author={Smoczyk, Knut},
			title={Bernstein theorems for length and area decreasing minimal maps},
			journal={Calc. Var. Partial Differential Equations},
			volume={50},
			date={2014},
			number={3-4},
			pages={549--577},
		}
		\bib{smoczyk}{article}{
			author={Smoczyk, Knut},
			title={Self-shrinkers of the mean curvature flow in arbitrary
				codimension},
			journal={Int. Math. Res. Not.},
			date={2005},
			number={48},
			pages={2983--3004},
		}
		\bib{swx}{article}{
			author={Smoczyk, Knut},
			author={Wang, Guofang},
			author={Xin, Y. L.},
			title={Bernstein type theorems with flat normal bundle},
			journal={Calc. Var. Partial Differential Equations},
			volume={26},
			date={2006},
			number={1},
			pages={57--67},
		}
		\bib{stavrou}{article}{
			author={Stavrou, Nikolaos},
			title={Selfsimilar solutions to the mean curvature flow},
			journal={J. Reine Angew. Math.},
			volume={499},
			date={1998},
			pages={189--198},
		}
	\end{biblist}
\end{bibdiv}

\end{document}